\theoremstyle{plain}
\newtheorem{thm}{Theorem}
\theoremstyle{definition}
\theoremstyle{remark}
\newtheorem{rem}[thm]{Remark}
\theoremstyle{plain}
\newtheorem{lem}[thm]{Lemma}
\theoremstyle{plain}
\global\long\def\e{\mathrm{e}}
\global\long\def\rd{\mathrm{d}}
\global\long\def\bu{\boldsymbol{u}}
\global\long\def\bv{\boldsymbol{v}}
\global\long\def\bx{\boldsymbol{x}}
\global\long\def\be{\boldsymbol{e}}
\global\long\def\bff{\boldsymbol{f}}
\global\long\def\bF{\boldsymbol{F}}
\global\long\def\bn{\boldsymbol{n}}
\global\long\def\bnabla{\boldsymbol{\nabla}}
\global\long\def\bcdot{\boldsymbol{\cdot}}
\global\long\def\bwedge{\boldsymbol{\wedge}}
\global\long\def\bzero{\boldsymbol{0}}
\global\long\def\para{{\raisebox{-4pt}{\scalebox{2.5}{\ensuremath{\cdot}}}}}
\global\long\def\bigO{O\mathopen{}}
\global\long\def\widebar#1{\overline{#1}}
\begin{document}

\title{Optimal asymptotic behavior of the vorticity\\
of a viscous flow past a two-dimensional body}

\author{\href{mailto:julien.guillod@unige.ch}{Julien Guillod} and \href{mailto:peter.wittwer@unige.ch}{Peter Wittwer}\\
{\small{Department of Theoretical Physics,}}\\
{\small{University of Geneva, Switzerland}}}

\date{June 30, 2015}

\maketitle

\begin{abstract}
The asymptotic behavior of the vorticity for the steady incompressible
Navier-Stokes equations in a two-dimensional exterior domain is described
in the case where the velocity at infinity $\bu_{\infty}$ is nonzero.
It is well known that the asymptotic behavior of the velocity field
is given by the fundamental solution of the Oseen system which is
the linearization of the Navier-Stokes equation around $\bu_{\infty}$.
The vorticity has the property of decaying algebraically inside a
parabolic region called the wake and exponentially outside. The previously
proven asymptotic expansions of the vorticity are relevant only inside
the wake because everywhere else the remainder is larger than the
asymptotic term. Here we present an asymptotic expansion that removes
this weakness. Surprisingly, the found asymptotic term is not given
by the Oseen linearization and has a power of decay that depends on
the data. This strange behavior is specific to the two dimensional
problem and is not present in three dimensions.
\end{abstract}
\textit{\small{Keywords:}}{\small{ Navier-Stokes equations, Fluid-structure
interactions, Asymptotic behavior, Vorticity}}\\
\textit{\small{MSC class:}}{\small{ 76D05, 35Q30, 74F10, }}35B40,
35C20, 76M45{\small{, 76D17, 76D25}}{\small \par}

\section{Introduction}

The stationary flow of an incompressible fluid past a body $B$ is
described by the Navier-Stokes equations,\begin{subequations}
\begin{equation}
\begin{aligned}\Delta\bu-\bnabla p & =\bu\bcdot\bnabla\bu+\bff\,, & \bnabla\bcdot\bu & =0\,,\\
\left.\bu\right|_{\partial B} & =\bu^{*}\,, & \lim_{|\bx|\to\infty}\bu & =\bu_{\infty}\,,
\end{aligned}
\label{eq:oseen-ns-0}
\end{equation}
in the domain $\Omega=\mathbb{R}^{2}\setminus\widebar B$, where $\bff$
is the source force, $\bu_{\infty}\neq\bzero$ the velocity at infinity
and $\bu^{*}$ is any boundary condition with no net flux,
\begin{equation}
\int_{\partial B}\bu^{*}\bcdot\bn=0\,.\label{eq:oseen-no-flux}
\end{equation}
\label{eq:oseen-ns-body}\end{subequations}We assume that the body
$B$ is an open bounded domain with smooth boundary. In view of the
symmetries of the equation, we assume without lost of generality that
$\bu_{\infty}=2\be_{1}$ and $\bzero\in B$.

This system has been subject to many investigations, see \citet[Chapter XII]{Galdi-IntroductiontoMathematical2011}
for a complete statement of the main results known for this problem.
\citet{Leray-Etudedediverses1933} has shown the existence of weak
solutions, but with the procedure he used, he was unable to verify
that $\bu$ tends to $\bu_{\infty}$ at large distances. \citet{Gilbarg.Weinberger-AsymptoticPropertiesof1974,Gilbarg.Weinberger-Asymptoticpropertiesof1978}
have shown that any Leray solution $\bu$ either converges at large
distances to some constant vector $\bu_{0}$, or the average of $\bu$
of over circles in the $L^{2}$-norm diverges as the size of the circle
grows. Later on, \citet{Amick-Leraysproblemsteady1988} proved that
if $\bff=\bzero$ and $\bu^{*}=\bzero$, then $\bu\in L^{\infty}$
and therefore $\bu$ converges to a constant $\bu_{0}$ at infinity.
However, the question if $\bu_{0}=\bu_{\infty}$ is still open in
general. In case $\bu_{\infty}\neq\bzero$, \citet{Finn-stationarysolutionsNavier1967,Galdi-ExistenceandUniqueness1993,Galdi-StationaryNavier-Stokesproblem2004}
used the Oseen approximation and a fixed point technique to prove
existence and uniqueness of solutions to \eqref{oseen-ns-body} for
small data. The asymptotic structure of the solutions was presented
by \citet{Babenko-AsymptoticBehaviorof1970} who shows in particular
that velocity behaves at infinity like the Oseen fundamental solution.
The asymptotic expansion of the velocity was also given under more
general assumptions by \citet{Galdi.Sohr-asymptoticstructureof1995,Sazonov-AsymptoticBehaviorof1999}.
The asymptotic behavior of the vorticity was first given by \citet[Theorem 8.1]{Babenko-AsymptoticBehaviorof1970}
only in the wake, and then by \citet[Theorem 3.5']{Clark-VorticityatInfinity1971}.
These two results are relevant only in the wake region\index{Wake!region},
\emph{i.e.} for $\left|\bx\right|-x_{1}\leq1$, because otherwise,
the remainder is larger than the asymptotic term which is given by
the Oseen linearization. In fact, we prove that the true asymptote
which is also valid outside the wake region is not given by the Oseen
linearization.  Under smallness conditions, we show that the asymptote
of the vorticity is given in polar coordinates $(r,\theta)$ by
\[
\omega(\bx)=r^{A\left(1-\cos\theta\right)+B\sin\theta}\left[\frac{\mu(\theta)}{r^{1/2}}+O\left(\frac{1}{r^{1/2+\varepsilon}}\right)\right]\e^{-r\left(1-\cos\theta\right)}\,,
\]
for all $\varepsilon\in(0,1)$, where $A,B\in\mathbb{R}$ depend linearly
on the net force $\bF$ and $\mu$ is a $2\pi$-periodic function
depending on $\bff$ and $\bu^{*}$. Surprisingly the power of decay
of the asymptote depends on the net force $\bF$ and in particular
this contradicts the statement of Theorem~XII.8.4 in \citet{Galdi-IntroductiontoMathematical2011}
for any solution with $\bF\neq\bzero$.

\paragraph{Notation}

For $\bx,\bx_{0}\in\mathbb{R}^{2}$ we use the following notation
\begin{align*}
r & =\left|\bx\right|, & r_{0} & =\left|\bx_{0}\right|, & r_{1} & =\left|\bx-\bx_{0}\right|,\\
\theta & =\angle\bx\,, & \theta_{0} & =\angle\bx_{0}\,, & \theta_{1} & =\angle\left(\bx-\bx_{0}\right),
\end{align*}
where $\angle\bx$ denotes the angle $\theta\in(-\pi,\pi]$ such that
$\bx=\left|\bx\right|\left(\cos\theta,\sin\theta\right)$. For a positive
function $w:\Omega\to\mathbb{R}$, we write $A(\bx,\bx_{0})=O(w(\bx))$,
if for $\bx_{0}$ in a bounded domain, there exists $C>0$ such that
for all $\bx\in\Omega$,
\[
\left|A(\bx,\bx_{0})\right|\leq C\left|w(\bx)\right|.
\]

\section{Asymptote for the linear problem}

It is well-known that the problem \eqref{oseen-ns-body} is related
to the Oseen system which is the linearization of \eqref{oseen-ns-body}
around $\bu=\bu_{\infty}=2\be_{1}$,\index{Oseen equations}
\begin{equation}
\begin{aligned}\Delta\bu-\bnabla p-2\partial_{1}\bu & =\bff\,, & \bnabla\bcdot\bu & =0\,,\\
\left.\bu\right|_{\partial B} & =\bu^{*}-\bu_{\infty}\,, & \lim_{|\bx|\to\infty}\bu & =\bzero\,,
\end{aligned}
\label{eq:oseen-oseen}
\end{equation}
The fundamental solution of the Oseen system is given by\index{Fundamental solution!Oseen equations}
\begin{align*}
\mathbf{E} & =\begin{pmatrix}\partial_{1}\psi-G & \partial_{2}\psi\\
\partial_{2}\psi & -\partial_{1}\psi
\end{pmatrix}, & \be & =-\bnabla H\,,
\end{align*}
with
\begin{align*}
\psi & =\frac{H+G}{2}\,, & H & =\frac{1}{2\pi}\log r\,, & G & =\frac{1}{2\pi}\e^{r\cos\theta}K_{0}(r)\,.
\end{align*}
Denoting by $\mathbf{T}(\bu,p)$ the stress tensor,
\[
\mathbf{T}(\bu,p)=\bnabla\bu+\left(\bnabla\bu\right)^{T}-p\boldsymbol{1}\,,
\]
the Green identity for the Oseen operator is
\begin{multline*}
\int_{\Omega}\left(\bnabla\bcdot\mathbf{T}(\bu,p)-2\partial_{1}\bu\right)\bcdot\bv-\int_{\Omega}\left(\bnabla\bcdot\mathbf{T}(\bv,q)+2\partial_{1}\bv\right)\bcdot\bu\\
=\int_{\partial\Omega}\left(\bv\bcdot\mathbf{T}(\bu,p)-\bu\bcdot\mathbf{T}(\bv,q)-2\bu\bcdot\bv\,\be_{1}\right)\bcdot\bn\,.
\end{multline*}
Therefore, the solution of the Oseen system is given by
\[
\bu(\bx)=\int_{\Omega}\mathbf{E}(\bx-\para)\bff-\int_{\partial\Omega}\left[\mathbf{E}(\bx-\para)\left(\mathbf{T}(\bu,p)-2\bu\otimes\be_{1}\right)+\bu\bcdot\mathbf{T}(\mathbf{E},\boldsymbol{w})(\bx-\para)\right]\bcdot\bn\,,
\]
with
\[
\int_{\partial\Omega}\bu\bcdot\mathbf{T}(\mathbf{E},\boldsymbol{w})(\bx-\para)\bcdot\bn=\int_{\partial\Omega}\bigl[\bn\bcdot\bnabla\mathbf{E}(\bx-\para)\bcdot\bu+\bu\bcdot\bnabla\mathbf{E}(\bx-\para)\bcdot\bn-\boldsymbol{w}(\bx-\para)\,\bu\bcdot\bn\bigr]\,,
\]
where $\para$ denotes a placeholder for the argument over which the
Green function is integrated. In order to obtain the representation
formula for the vorticity, we remark that for any $\boldsymbol{A}\in\mathbb{R}^{2}$,
\[
\bnabla\bwedge\left(\mathbf{E}\bcdot\boldsymbol{A}\right)=\bnabla G\bwedge\boldsymbol{A}\,,
\]
with $G$ as defined above, so we obtain
\begin{align*}
\omega(\bx) & =\int_{\Omega}\bnabla G(\bx-\para)\bwedge\bff-\int_{\partial\Omega}\left[\bnabla G(\bx-\para)\bwedge\left(\mathbf{T}(\bu,p)-2\bu\otimes\be_{1}\right)\right]\bcdot\bn\\
 & \phantom{=}-\int_{\partial\Omega}\left[\bnabla_{\bx}\bigl(\bn\bcdot\bnabla G(\bx-\para)\bigr)\bwedge\bu+\bnabla_{\bx}\left(\bu\bcdot\bnabla G(\bx-\para)\right)\bwedge\bn\right].
\end{align*}
The asymptotic expansions of the fundamental solutions are given
by
\begin{align*}
\mathbf{E}(\bx) & =\frac{-1}{\sqrt{32\pi}}\left(\frac{1}{r^{1/2}}+\bigO\left(\frac{1}{r^{3/2}}\right)\right)\e^{-r\left(1-\cos\theta\right)}\begin{pmatrix}1+\cos\theta & \sin\theta\\
\sin\theta & 1-\cos\theta
\end{pmatrix}+\frac{1}{4\pi r}\begin{pmatrix}\cos\theta & \sin\theta\\
\sin\theta & -\cos\theta
\end{pmatrix},\\
\bnabla G(\bx) & =\frac{1}{\sqrt{8\pi}}\left(1-\cos\theta,-\sin\theta\right)\left(\frac{1}{r^{1/2}}+\bigO\left(\frac{1}{r^{3/2}}\right)\right)\e^{-r\left(1-\cos\theta\right)}\,,
\end{align*}
and for $\left|\alpha\right|=1$,
\begin{align*}
\left|D^{\alpha}\mathbf{E}\right| & \lesssim\left(\frac{\left|\theta\right|}{r^{1/2}}+\frac{1}{r^{3/2}}\right)\e^{-r\left(1-\cos\theta\right)}+\frac{1}{r^{2}}\,,\\
\left|D^{\alpha}\bnabla G\right| & \lesssim\left(\frac{\left|\theta\right|^{2}}{r^{1/2}}+\frac{1}{r^{3/2}}\right)\e^{-r\left(1-\cos\theta\right)}\,.
\end{align*}
If $\bff$ has compact support, the solution of the Oseen equation
\eqref{oseen-oseen} behaves like the fundamental solution,
\begin{align}
\bu(\bx) & =\mathbf{E}(\bx)\bF+\bigO\left(\frac{\left|\theta\right|}{r^{1/2}}+\frac{1}{r^{3/2}}\right)\e^{-r\left(1-\cos\theta\right)}+\bigO\left(\frac{1}{r^{2}}\right),\label{eq:oseen-asy1-lin-u}\\
\omega(\bx) & =\bnabla G(\bx)\bcdot\bF^{\perp}+\bigO\left(\frac{\left|\theta\right|^{2}}{r^{1/2}}+\frac{1}{r^{3/2}}\right)\e^{-r\left(1-\cos\theta\right)}\,,\label{eq:oseen-asy1-lin-w}
\end{align}
where $\bF$ is the net force
\[
\bF=\int_{\Omega}\bff+\int_{\partial B}\left(\mathbf{T}(\bu,p)-2\bu\otimes\be_{1}\right)\bcdot\bn\,.
\]
Explicitly, the asymptotic expansion of the velocity is given by\index{Oseen equations!asymptotic behavior}\index{Asymptotic behavior!in unbounded Lipschitz domains!Oseen solutions}
\begin{equation}
\bu(\bx)=\bu_{w}(\bx)+\bu_{h}(\bx)+\bigO\left(\frac{\left|\theta\right|}{r^{1/2}}+\frac{1}{r^{3/2}}\right)\e^{-r\left(1-\cos\theta\right)}+\bigO\left(\frac{1}{r^{2}}\right),\label{eq:oseen-asy-lin-u}
\end{equation}
where $\bu_{w}$ is the wake part and $\bu_{h}$ a harmonic function,\index{Exact solutions!harmonic}
\begin{align}
\bu_{w}(\bx) & =-\frac{F_{1}\be_{1}}{\sqrt{8\pi}}\frac{1}{r^{1/2}}\e^{-r\left(1-\cos\theta\right)}\,,\label{eq:oseen-def-uw}\\
\bu_{h}(\bx) & =\frac{1}{4\pi r}\begin{pmatrix}\cos\theta & \sin\theta\\
\sin\theta & -\cos\theta
\end{pmatrix}\bF=\frac{F_{1}}{4\pi}\frac{\be_{r}}{r}-\frac{F_{2}}{4\pi}\frac{\be_{\theta}}{r}\,.\label{eq:oseen-def-uh}
\end{align}
In contrast to the asymptotic expansion \eqref{oseen-asy1-lin-u}
or \eqref{oseen-asy-lin-u} of the velocity field, the asymptotic
expansion \eqref{oseen-asy1-lin-w} for the vorticity is only relevant
inside the wake region, because outside the wake region, the remainder
is larger than the asymptotic term. In order to obtain an asymptote
that is relevant in all directions, we have to proceed differently,
by using the following asymptotic expansion,
\[
\bnabla G(\bx-\bx_{0})=\bnabla G(\bx)\e^{r_{0}\left(\cos(\theta-\theta_{0})-\cos\theta_{0}\right)}+\bigO\left(\frac{1}{r^{3/2}}\right)\e^{-r\left(1-\cos\theta\right)}.
\]
By applying this result, we obtain 
\begin{equation}
\omega(\bx)=\bnabla G(\bx)\bcdot\bF^{\perp}(\theta)+\bigO\left(\frac{1}{r^{3/2}}\right)\e^{-r\left(1-\cos\theta\right)}\,,\label{eq:oseen-asy-lin-w}
\end{equation}
where $\bF(\theta)$ is now a function depending on the angle $\theta$,
\[
\bF(\theta)=\int_{\Omega}\e^{r_{0}\left(\cos(\theta-\theta_{0})-\cos\theta_{0}\right)}\bff+\int_{\partial B}\e^{r_{0}\left(\cos(\theta-\theta_{0})-\cos\theta_{0}\right)}\left(\mathbf{T}(\bu,p)-2\bu\otimes\be_{1}\right)\bcdot\bn\,.
\]

To our knowledge, this asymptotic formula is nowhere mentioned in
the literature. With this expression, the asymptote is now detached
in all directions from the remainder.

\section{Asymptote for the nonlinear problem}

For the nonlinear problem \eqref{oseen-ns-body}, neither the asymptotic
behavior \eqref{oseen-asy-lin-u} nor \eqref{oseen-asy-lin-w} is
correct, as shown below. The best results concerning the asymptotic
behavior of $\bu$ and $\omega$ for the nonlinear problem \eqref{oseen-ns-body}
are due to \citet{Babenko-AsymptoticBehaviorof1970}. In particular
he shows the following result:
\begin{thm}[{\citealp[Theorems 6.1 \& 8.1]{Babenko-AsymptoticBehaviorof1970}}]
\label{thm:oseen-babenko}\index{Asymptotic behavior!in unbounded Lipschitz domains!Navier-Stokes solutions!for bu_{infty}neqbzero
@for
$\bu_{\infty}\neq\bzero$}\index{Navier-Stokes equations!in Lipschitz domains!asymptotic behavior!for bu_{infty}neqbzero
@for
$\bu_{\infty}\neq\bzero$}If $\bu$ is a physically reasonable solution of \eqref{oseen-ns-body},
\emph{i.e.} such that $\bu-\bu_{\infty}=O(r^{-1/4-\varepsilon})$
for some $\varepsilon>0$ small enough, then the velocity satisfies
\[
\bu(\bx)-\bu_{\infty}=\mathbf{E}(\bx)\bF+\bigO\left(\frac{\left|\theta\right|\left(\log r,1\right)}{r^{1/2}}+\frac{1}{r}\right)\e^{-r\left(1-\cos\theta\right)}+\bigO\left(\frac{1}{r^{1+\varepsilon}}\right),
\]
and the vorticity satisfies
\[
\omega(\bx)=\bnabla G(\bx)\bcdot\bF^{\perp}+\bigO\left(\frac{\log r}{r^{3/2}}\right)\e^{-\mu r\left(1-\cos\theta\right)}\,,
\]
for some $\mu\in\left(0,1\right)$, where $\bF\in\mathbb{R}^{2}$
is the net force,
\[
\bF=\int_{\Omega}\bff+\int_{\partial B}\left(\mathbf{T}(\bu,p)-\bu\otimes\bu\right)\bcdot\bn\,.
\]

\end{thm}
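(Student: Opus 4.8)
The plan is to treat \eqref{oseen-ns-body} as an Oseen problem whose forcing absorbs the nonlinearity, and then to bootstrap the decay starting from the hypothesis $\bu-\bu_{\infty}=\bigO(r^{-1/4-\varepsilon})$. Setting $\bv=\bu-\bu_{\infty}$, the momentum equation reads $\Delta\bv-\bnabla p-2\partial_{1}\bv=\bff+\bv\bcdot\bnabla\bv$, and since $\bnabla\bcdot\bv=0$ the quadratic term is a divergence, $\bv\bcdot\bnabla\bv=\bnabla\bcdot(\bv\otimes\bv)$; taking the curl, the vorticity $\omega=\bnabla\bwedge\bv$ solves the scalar Oseen equation $\Delta\omega-2\partial_{1}\omega=\bnabla\bwedge\bff+\bnabla\bcdot(\bv\,\omega)$, whose fundamental solution is, up to sign, the function $G$ of Section~2. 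First I would insert these right-hand sides into the representation formulas of Section~2, integrate the divergence terms by parts, and check that the volume term together with the boundary integrals assembles into $\mathbf{E}(\bx)\bF$ for the velocity and $\bnabla G(\bx)\bcdot\bF^{\perp}$ for the vorticity, with $\bF$ the net force in the stated form --- the extra $-\bu\otimes\bu$ on $\partial B$ arising precisely from that integration by parts. Since $\bv$ is only assumed to decay like $r^{-1/4-\varepsilon}$, the quadratic term $|\bv\otimes\bv|=\bigO(r^{-1/2-2\varepsilon})$ is not absolutely integrable over $\mathbb{R}^{2}$, so keeping it in divergence form and controlling the flux at large radius is essential; this is the feature that distinguishes the two-dimensional problem from the three-dimensional one.

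The analytic core is a family of convolution estimates for the Oseen kernels against functions with algebraic decay and wake-type anisotropy. Using the pointwise bounds recorded above --- the splitting of $\mathbf{E}$ into a wake part carrying the factor $\e^{-r(1-\cos\theta)}$ and a potential part of size $1/r$, together with the quoted bounds on $\mathbf{E}$, $D^{\alpha}\mathbf{E}$, $\bnabla G$ and $D^{\alpha}\bnabla G$ --- one shows that if a source $g$ has the form $\bigO(r^{-\beta})\e^{-\gamma r(1-\cos\theta)}$ with admissible $\beta$ and $\gamma\le1$, then $\bnabla G*g$ again has a wake structure, with decay governed by $\beta$ and an exponential rate $\mu<\gamma$ that is strictly degraded because convolving two wake-localized profiles widens the parabolic region. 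This degradation is exactly the origin of the constant $\mu\in(0,1)$ in the statement, and the borderline exponents produce the powers of $\log r$. I would establish these estimates once and for all as lemmas and afterwards apply them mechanically; for the velocity, the analogous estimate for $\mathbf{E}*g$ splits into a wake contribution, which keeps the full rate, and a potential-type contribution, which is only $\bigO(r^{-1-\varepsilon})$ and carries no exponential --- hence the two separate remainder terms in the velocity expansion.

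With these tools the bootstrap is routine in structure if heavy in bookkeeping. From $\bv=\bigO(r^{-1/4-\varepsilon})$ one has $|\bv|^{2}=\bigO(r^{-1/2-2\varepsilon})$, and interior elliptic estimates together with the results of Gilbarg and Weinberger furnish companion decay for $\omega$ and $\bnabla\bv$; substituting into the integral equations and invoking the convolution lemmas yields a strictly better decay for both $\bv$ and $\omega$. Iterating finitely many times drives the quadratic sources below the order of the respective leading terms, at which point the velocity remainder has reached $\bigO\bigl(|\theta|(\log r,1)\,r^{-1/2}+r^{-1}\bigr)\e^{-r(1-\cos\theta)}+\bigO(r^{-1-\varepsilon})$ and the vorticity remainder $\bigO(r^{-3/2}\log r)\,\e^{-\mu r(1-\cos\theta)}$, which is the assertion.

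The step I expect to be the main obstacle is the convolution analysis: producing the anisotropic estimates with the correct wake factor --- in particular the \emph{degraded} rate $\mu<1$ for the vorticity --- while simultaneously handling the non-integrability of $\bv\bcdot\bnabla\bv$ by exploiting its divergence form and showing that no spurious term survives in the limit of large radius. A related subtlety is that the net force $\bF$ must be shown to be a well-defined finite vector, independent of the surface over which the momentum flux is evaluated, despite those convergence issues; once that is in hand, the identification of $\mathbf{E}(\bx)\bF$ and $\bnabla G(\bx)\bcdot\bF^{\perp}$ as the leading terms is immediate from the asymptotic expansions of $\mathbf{E}$ and $\bnabla G$ quoted above.
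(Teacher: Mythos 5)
This theorem is not proved in the paper at all: it is imported verbatim from Babenko (Theorems 6.1 and 8.1), and the authors use it only in the Remark following \thmref{oseen-main} to verify their hypothesis \eqref{oseen-hypothesis}. There is therefore no in-paper proof to compare against, and your sketch has to be judged on its own terms.

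Your outline is the classical route (Babenko; Galdi, Chapter XII; Clark): absorb the nonlinearity in divergence form into the Oseen forcing, represent $\bv=\bu-\bu_{\infty}$ and $\omega$ through the kernels $\mathbf{E}$ and $G$, and bootstrap from $\bv=\bigO(r^{-1/4-\varepsilon})$. The structural points you flag (divergence form to handle non-integrability of $\bv\otimes\bv$, the origin of $\mu<1$, the two-part remainder for the velocity) are the right ones. But as a proof there are genuine gaps. First, the anisotropic convolution lemmas --- bounds on $\int_{\Omega}\bnabla G(\bx-\bx_{0})\bwedge\bigl(\bv\,\omega\bigr)(\bx_{0})\,\rd\bx_{0}$ and on $\int_{\Omega}\bnabla\mathbf{E}(\bx-\bx_{0})\bcdot(\bv\otimes\bv)(\bx_{0})\,\rd\bx_{0}$ against sources with wake anisotropy --- are the entire analytic content of the theorem, and you only assert their conclusions; that the output rate is strictly degraded to some $\mu<1$, and that the borderline exponents produce exactly one power of $\log r$, is precisely what must be computed (compare the effort the paper spends on the much simpler kernel $W$ in \lemref{oseen-bounds-W} and \lemref{oseen-bound-exp}). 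Second, ``iterating finitely many times'' is not justified: one must show the gain per step is uniform, that $\mu$ does not degrade to $0$ along the iteration, and why the scheme saturates at the particular exponents $r^{-1}$ (velocity, inside the wake) and $r^{-3/2}\log r$ (vorticity). Third, be careful that the non-integrable quadratic volume term contributes only to the remainder and not to the coefficient of $\mathbf{E}(\bx)$; only the compactly supported data and the boundary fluxes assemble into $\bF$, and the finiteness and surface-independence of $\bF$ that you defer is itself nontrivial in two dimensions. Finally, note that the loss $\mu<1$ is not a removable technicality of the convolution estimates: \thmref{oseen-main} shows the vorticity carries the factor $r^{A\left(1-\cos\theta\right)+B\sin\theta}$, which for $\bF\neq\bzero$ dominates any $\bigO(r^{-3/2})$ remainder outside the wake, so no argument along these lines can reach $\mu=1$.
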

This result is optimal for the velocity in a sense that the remainder
decays faster than the asymptotic terms. However, for the vorticity,
this result is only relevant in the wake since the remainder is greater
than the asymptotic term if $\theta\neq0$, due to the fact that $\mu\in\left(0,1\right)$.
In fact, we will show that the asymptotic behavior of the vorticity
is not given by the Oseen tensor outside the wake. More precisely
if we consider the vorticity which is given by
\[
\Delta\omega-2\partial_{1}\omega=\boldsymbol{u}\bcdot\bnabla\omega+\bnabla\bwedge\bff\,,
\]
then the Oseen approximation is given by $\bu=\bzero$. We will show
that the linearization that leads to the correct asymptotic behavior
of the nonlinear system is given by linearizing around the harmonic
function $\bu_{h}$ of the Oseen tensor itself \eqref{oseen-def-uh}.
We will show that this new linear system has an asymptotic behavior
where the power of decay itself depends on $\bF$:
\begin{thm}
\label{thm:oseen-main}\index{Asymptotic behavior!in unbounded Lipschitz domains!Navier-Stokes solutions!for bu_{infty}neqbzero
@for
$\bu_{\infty}\neq\bzero$}\index{Navier-Stokes equations!in Lipschitz domains!asymptotic behavior!for bu_{infty}neqbzero
@for
$\bu_{\infty}\neq\bzero$}If there exists $\varepsilon\in\left(0,1/2\right)$ and $A,B\in\mathbb{R}$
such that the solution $\bu$ of \eqref{oseen-ns-body} satisfies
\begin{equation}
\begin{split}\left|\left(\bu-\bu_{\infty}-\bu_{h}\right)\bcdot\be_{r}\right| & \leq\frac{\nu}{r^{1/2}}\e^{-r\left(1-\cos\theta\right)/2}+\frac{\nu}{r^{1+\varepsilon}}\,,\\
\left|\left(\bu-\bu_{\infty}-\bu_{h}\right)\bcdot\be_{\theta}\right| & \leq\frac{\nu}{r}\e^{-r\left(1-\cos\theta\right)/2}+\frac{\nu}{r^{1+\varepsilon}}\,,
\end{split}
\label{eq:oseen-hypothesis}
\end{equation}
where
\[
\bu_{h}=2\bnabla\left(A\log r+B\theta\right)=\frac{2A}{r}\be_{r}+\frac{2B}{r}\be_{\theta}\,,
\]
for some $\nu>0$ small enough, then the solution of \eqref{oseen-ns-body}
with $\bff$ a source term of compact support satisfies for some $C>0$,
\[
\left|\omega\right|\leq C\, r^{A\left(1-\cos\theta\right)+B\sin\theta}r^{-1/2}\e^{-r\left(1-\cos\theta\right)}\,.
\]
Moreover,
\[
\omega(\bx)=r^{A\left(1-\cos\theta\right)+B\sin\theta}\left[\frac{\mu(\theta)}{r^{1/2}}+\bigO\left(\frac{1}{r^{1/2+\varepsilon}}\right)\right]\e^{-r\left(1-\cos\theta\right)}\,,
\]
where $\mu$ is some $2\pi$-periodic function.\end{thm}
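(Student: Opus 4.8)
\emph{Proof plan.} The strategy is to reduce the linear operator obtained by linearizing around $\bu_{\infty}+\bu_{h}$ to a small perturbation of the modified Helmholtz operator, read off the (data dependent) asymptotics of its fundamental solution, and then close a fixed-point argument for the genuinely nonlinear term. First I would record the vorticity equation in the right form: taking the two-dimensional curl of \eqref{oseen-ns-body}, using $\bu_{\infty}\bcdot\bnabla\omega=2\partial_{1}\omega$ and the splitting $\bu=\bu_{\infty}+\bu_{h}+\bv$ with $\bv$ controlled by \eqref{oseen-hypothesis}, gives
\[
L\omega:=\bigl(\Delta-2\partial_{1}-\bu_{h}\bcdot\bnabla\bigr)\omega=\bv\bcdot\bnabla\omega+\bnabla\bwedge\bff=:h\,,
\]
and the structural fact that drives the whole argument is $\bu_{h}=2\bnabla\phi$ with $\phi=A\log r+B\theta$ harmonic on $\Omega$. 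The source $h$ is compactly supported in its $\bff$ part and, by \eqref{oseen-hypothesis}, small in its $\bv$ part, so the problem reduces to understanding the fundamental solution of $L$.

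Next I would conjugate $L$ by two successive weights. Setting $\omega=\e^{\phi}\Omega$ and using $\Delta\phi=0$, a direct computation gives
\[
\e^{-\phi}L\bigl(\e^{\phi}\Omega\bigr)=\bigl(\Delta-2\partial_{1}\bigr)\Omega-V\Omega\,,\qquad V=\left|\bnabla\phi\right|^{2}+2\partial_{1}\phi=\frac{A^{2}+B^{2}}{r^{2}}+\frac{2\left(A\cos\theta-B\sin\theta\right)}{r}\,,
\]
and then the Oseen substitution $\Omega=\e^{x_{1}}g$, together with the identity $(\Delta-2\partial_{1})(\e^{x_{1}}g)=\e^{x_{1}}(\Delta-1)g$, turns the equation into the perturbed modified Helmholtz equation $(\Delta-1-V)g=\e^{-x_{1}-\phi}h$. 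The decisive point, and the reason the exponent in the conclusion depends on the data, is that $V=\bigO(1/r)$ is \emph{not} integrable along rays: $\int^{r}V\,\rd s$ diverges logarithmically with angular coefficient $2(A\cos\theta-B\sin\theta)$.

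The crux is then the analysis of $\Delta-1-V$. I would construct its fundamental solution $\Gamma_{V}$ by iterating around $-\tfrac{1}{2\pi}K_{0}$, the fundamental solution of $\Delta-1$; each iterate is estimated using the exponential decay of $K_{0}$ and the local integrability of $V$ at its singularity, while along rays the logarithmic divergence accumulates and the series resums it into a power $r^{-c(\theta)}$ with $c(\theta)=\tfrac12+A\cos\theta-B\sin\theta$ (informally $\sum_{n}\frac{1}{n!}(c\log r)^{n}=r^{c}$), which at the same time gives convergence of the series. This yields, uniformly for the second argument in a compact set, $\Gamma_{V}(\bx,\by)=\nu(\theta)\,r^{-c(\theta)}\e^{-r}$ up to lower-order terms, for some angular amplitude $\nu$. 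Undoing the two substitutions multiplies by $\e^{x_{1}+\phi}=r^{A}\e^{B\theta}\e^{r\cos\theta}$, and since $r^{A-c(\theta)}\e^{r\cos\theta-r}=r^{A(1-\cos\theta)+B\sin\theta-1/2}\e^{-r(1-\cos\theta)}$ one recovers exactly the claimed profile, the a priori multivalued factor $\e^{B\theta}$ being absorbed into the $2\pi$-periodic amplitude $\mu(\theta)=\nu(\theta)\e^{B\theta}$, whose periodicity is forced by single-valuedness of $\omega$. Writing a representation formula for $L$ analogous to the Green identity of Section~2, one then expresses $\omega$ as a linear functional of $h$ and the boundary data; for the $\bff$-part and the boundary data (both of compactly supported type) the leading term with the above profile comes out at once.

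Finally I would close the nonlinear loop. The a priori bound on $\omega$, combined with interior elliptic estimates on unit balls, bounds $\bnabla\omega$ by the same weight with an extra $r^{-1}$, and then \eqref{oseen-hypothesis} bounds $\bv\bcdot\bnabla\omega$ by $\nu$ times a profile against which the kernel $\e^{x_{1}+\phi}\Gamma_{V}$ integrates to the same weight; hence for $\nu$ small the map sending $\omega$ to the solution of $L\widetilde{\omega}=\bv\bcdot\bnabla\omega+\bnabla\bwedge\bff$ through $\Gamma_{V}$ is a contraction on the weighted space with weight $r^{A(1-\cos\theta)+B\sin\theta}r^{-1/2}\e^{-r(1-\cos\theta)}$. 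Its fixed point is $\omega$, which gives the pointwise bound and, by extracting the leading term of $\Gamma_{V}$, the stated expansion with remainder $\bigO(r^{-1/2-\varepsilon})$ relative to the weight (the rate $\varepsilon$ coming from the $r^{-1-\varepsilon}$ tail in \eqref{oseen-hypothesis}), with $\mu$ inheriting the dependence on $\bff$ and $\bu^{*}$ from the data. The main obstacle is the third step: making rigorous the resummation that turns the merely borderline-decaying potential $V$ into the data dependent exponent $c(\theta)$, and verifying that the formally multivalued intermediate conjugations recombine into a genuinely single-valued profile with $2\pi$-periodic amplitude.
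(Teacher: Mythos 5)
Your reduction identifies the right mechanism --- $\bu_h=2\bnabla\phi$ with $\phi$ harmonic, conjugation down to a modified Helmholtz operator, and the correct exponent $c(\theta)=\tfrac12+A\cos\theta-B\sin\theta$ --- but the step you yourself flag as the main obstacle is a genuine gap, and it is precisely the step the paper's proof is designed to avoid. After your two conjugations the potential $V=|\bnabla\phi|^{2}+2\partial_{1}\phi=\bigO(1/r)$ is not integrable, so the Born series around $-\tfrac{1}{2\pi}K_{0}$ does not visibly produce the claimed profile: each iterate is a convolution of $K_{0}$-type kernels against a $1/r_{0}$ potential, and the heuristic $\sum_{n}\frac{1}{n!}(c\log r)^{n}=r^{c}$ requires showing that the angular localization of $\e^{r-r_{0}-r_{1}}$ (which pins $\theta_{0}$ near $\theta$ only where $r_{0}$ is large) reproduces the \emph{direction-dependent} coefficient $A\cos\theta-B\sin\theta$ at every order, with factorial control of the errors; nothing in the proposal does this. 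The paper sidesteps the resummation entirely by conjugating with the full guessed profile $r^{A(1-\cos\theta)+B\sin\theta}\e^{-r(1-\cos\theta)}$ in one step: the resulting leading operator $\Delta-2\partial_{r}-\tfrac{1}{r}$ is \emph{exactly} conjugate to $\Delta-1$ via $\e^{r}$, with explicit kernel $W=\tfrac{1}{2\pi}\e^{r-r_{0}}K_{0}(r_{1})$, and the leftover zeroth-order term $\varphi$ is $\bigO(r^{-1-\varepsilon})$, hence genuinely perturbative. A secondary but real defect of your route: $\e^{\phi}=r^{A}\e^{B\theta}$ is multivalued on the non-simply-connected exterior domain, so $\Omega$ and $g$ are not globally defined; the paper's weight $r^{A(1-\cos\theta)+B\sin\theta}\e^{-r(1-\cos\theta)}$ is single-valued.

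There is also an error in your closing fixed-point step. Interior elliptic estimates do not bound $\bnabla\omega$ by the same weight times an extra $r^{-1}$: differentiating $\e^{-r(1-\cos\theta)}$ radially costs a factor $(1-\cos\theta)$, which is $O(1)$ outside the wake, so $\partial_{r}\omega$ is generically comparable to $\omega$, not $r^{-1}$ smaller. The term $\bar{\bu}\bcdot\bnabla\omega$ can still be controlled, but only through anisotropic bookkeeping: radial and angular derivatives must be estimated separately, using $r(1-\cos\theta)\e^{-r(1-\cos\theta)}\lesssim\e^{-r(1-\cos\theta)/2}$ against the $\e^{-r(1-\cos\theta)/2}$ factor in the hypothesis \eqref{eq:oseen-hypothesis}. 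This is exactly what the paper's separate bounds $\left|\be_{r}\bcdot\bnabla a\right|\lesssim r^{-1-\varepsilon}$ and $\left|\be_{\theta}\bcdot\bnabla a\right|\lesssim r^{-1}$ on the conjugated unknown, together with the distinct radial and angular derivative estimates of $W$ in Lemma~\ref{lem:oseen-bounds-W}, accomplish; as written, your blanket $r^{-1}$ gain makes the contraction appear to close when the true bound is weaker.
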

\begin{rem}
For $\bu_{\infty}$, $\bff$, and $\bu^{*}$ small enough, the hypothesis
\eqref{oseen-hypothesis} on $\bu$ follows from \thmref{oseen-babenko},
with
\begin{align*}
A & =\frac{F_{1}}{8\pi}\,, & B & =-\frac{F_{2}}{8\pi}\,.
\end{align*}
\end{rem}
\begin{proof}
Let $\bu$ be a solution of \eqref{oseen-ns-body} which by hypothesis
can be written as $\bu=\bu_{\infty}+\bu_{h}+\bar{\bu}$, with $\bar{\bu}$
satisfying
\begin{align*}
\left|\bar{\bu}\bcdot\be_{r}\right| & \leq\frac{\nu}{r^{1/2}}\e^{-r\left(1-\cos\theta\right)/2}+\frac{\nu}{r^{1+\varepsilon}}\,, & \left|\bar{\bu}\bcdot\be_{\theta}\right| & \leq\frac{\nu}{r}\e^{-r\left(1-\cos\theta\right)/2}+\frac{\nu}{r^{1+\varepsilon}}\,.
\end{align*}
This expression is to be understood as To prove the result, we consider
the vorticity equation
\begin{equation}
\Delta\omega-2\partial_{1}\omega-\boldsymbol{u}_{h}\bcdot\bnabla\omega=\bar{\bu}\bcdot\bnabla\omega+\bnabla\bwedge\bff\,.\label{eq:oseen-eq-vort}
\end{equation}
The change of variables,
\[
\omega(r,\theta)=r^{A\left(1-\cos\theta\right)+B\sin\theta}\e^{-r\left(1-\cos\theta\right)}a(r,\theta)\,,
\]
transforms the original equation \eqref{oseen-eq-vort} into
\begin{equation}
\Delta a-2\partial_{r}a-\frac{a}{r}=\boldsymbol{v}\bcdot\bnabla a+\varphi a+R\,,\label{eq:oseen-eq-new}
\end{equation}
where $\boldsymbol{v}$ and $\varphi$ are linearly related to $\boldsymbol{u}_{h}$
and $\bar{\bu}$ and satisfy
\begin{align*}
\left|\boldsymbol{v}\bcdot\be_{r}\right| & \lesssim\frac{\nu}{r^{1/2}}\,, & \left|\boldsymbol{v}\bcdot\be_{\theta}\right| & \lesssim\frac{\nu\log r}{r}\,, & \left|\varphi\right| & \lesssim\frac{\nu}{r^{1+\varepsilon}}\,,
\end{align*}
and where the source term $R$ is given by
\[
R(r,\theta)=r^{-A\left(1-\cos\theta\right)-B\sin\theta}\e^{r\left(1-\cos\theta\right)}\left(\bnabla\bwedge\bff\right)\,.
\]
We will show that the solution $a$ of \eqref{oseen-eq-new} satisfies
the bounds
\begin{align}
\left|a\right| & \lesssim\frac{1}{r^{1/2}}\,, & \left|\be_{r}\bcdot\bnabla a\right| & \lesssim\frac{1}{r^{1+\varepsilon}}\,, & \left|\be_{\theta}\bcdot\bnabla a\right| & \lesssim\frac{1}{r}\,,\label{eq:oseen-bound}
\end{align}
which imply the bound claimed on $\omega$. However in order to prove
\eqref{oseen-bound}, a bootstrap argument in not sufficient. The
idea is to consider the equation \eqref{oseen-eq-new} as a linear
equation in $a$ with $a$ and $\bnabla a$ given on $\partial\Omega$
for fixed $\boldsymbol{v}$, $\varphi$, and $R$ and to construct
a solution $a$ by a fixed point argument. By uniqueness, the solution
constructed by the fixed point argument is equal to the solution whose
existence is assumed by hypothesis.

The fundamental solution of the linear operator defining the left
hand-side of \eqref{oseen-eq-new} is
\begin{equation}
W(\boldsymbol{x},\boldsymbol{x}_{0})=\frac{1}{2\pi}\e^{r-r_{0}}K_{0}\left(\left|\boldsymbol{x}-\boldsymbol{x}_{0}\right|\right),\label{eq:oseen-def-W}
\end{equation}
where $(r_{0},\theta_{0})$ denotes the polar coordinates of $\bx_{0}$.
In view of the Green identity,
\[
\int_{\Omega}\left(\Delta b+2\partial_{r}b+\frac{b}{r}\right)a=\int_{\Omega}\left(\Delta a-2\partial_{r}a-\frac{a}{r}\right)b+\int_{\partial\Omega}\left(a\bnabla b-b\bnabla a+ab\be_{r}\right)\bcdot\bn\,,
\]
the solution can be written as
\begin{equation}
a(\boldsymbol{x})=\int_{\Omega}WR+\int_{\Omega}W\bigl(\varphi a+\boldsymbol{v}\bnabla a\bigr)+\int_{\partial\Omega}\bigl[a\bnabla_{\bx_{0}}W-W\bnabla a+aW\be_{r}\bigr]\bcdot\bn\,,\label{eq:oseen-representation}
\end{equation}
where the integrations are performed over $\bx_{0}$. For $i=1,2,3$,
we denote by $a_{i}(\bx)$ the $i$th term in the expression \eqref{oseen-representation}.

The asymptotic expansion at large $r$, of the fundamental solution
is given by
\begin{align*}
W(\boldsymbol{x},\boldsymbol{x}_{0}) & =\frac{1}{\sqrt{8\pi}}\frac{1}{r^{1/2}}\e^{-r_{0}\left(1-\cos\left(\theta-\theta_{0}\right)\right)}+\bigO\left(\frac{1}{r^{3/2}}\right),\\
\bnabla_{\bx_{0}}W(\boldsymbol{x},\boldsymbol{x}_{0}) & =\frac{\left(\cos\theta-\cos\theta_{0},\sin\theta-\sin\theta_{0}\right)}{\sqrt{8\pi}}\frac{1}{r^{1/2}}\e^{-r_{0}\left(1-\cos\left(\theta-\theta_{0}\right)\right)}+\bigO\left(\frac{1}{r^{3/2}}\right),
\end{align*}
and since $R$ has compact support and $\partial\Omega$ is a bounded,
the first and the third term of \eqref{oseen-representation} have
the claimed asymptotic behavior,
\begin{align*}
a_{1}(\bx) & =\frac{1}{\sqrt{8\pi}}\frac{1}{r^{1/2}}\int_{\mathbb{R}^{2}}\e^{-r_{0}\left(1-\cos\left(\theta-\theta_{0}\right)\right)}R(r_{0},\theta_{0})+\bigO\left(\frac{1}{r^{3/2}}\right)=\frac{\mu_{1}(\theta)}{r^{1/2}}+\bigO\left(\frac{1}{r^{3/2}}\right),\\
a_{3}(\bx) & =\frac{\mu_{3}(\theta)}{r^{1/2}}+\bigO\left(\frac{1}{r^{3/2}}\right),
\end{align*}
where $\mu_{1}$ and $\mu_{3}$ are $2\pi$-periodic functions of
the angle $\theta$,
\begin{eqnarray*}
\mu_{1}(\theta) & = & \frac{1}{\sqrt{8\pi}}\int_{\Omega}r_{0}^{-A\left(1-\cos\theta_{0}\right)-B\sin\theta_{0}}\e^{r_{0}\left(\cos\left(\theta-\theta_{0}\right)-\cos\theta_{0}\right)}\left(\bnabla\bwedge\bff\right)\,,\\
\mu_{3}(\theta) & = & \frac{1}{\sqrt{8\pi}}\int_{\partial\Omega}\e^{-r_{0}\left(1-\cos\left(\theta-\theta_{0}\right)\right)}\left[a\left(\cos\theta-\cos\theta_{0},\sin\theta-\sin\theta_{0}\right)-\bnabla a+a\be_{r}\right]\bcdot\bn\,.
\end{eqnarray*}
In the same way,
\begin{align*}
\bnabla_{\bx}W(\boldsymbol{x},\boldsymbol{x}_{0}) & =\bigO\left(\frac{1}{r^{3/2}}\right), & \bnabla_{\bx}\bnabla_{\bx_{0}}W(\boldsymbol{x},\boldsymbol{x}_{0}) & =\bigO\left(\frac{1}{r^{3/2}}\right),
\end{align*}
and we obtain 
\begin{align*}
\left|\bnabla a_{1}\right| & \lesssim\frac{1}{r^{3/2}}\,, & \left|\bnabla a_{3}\right| & \lesssim\frac{1}{r^{3/2}}\,,
\end{align*}
so $a_{1}$ and $a_{3}$ satisfy \eqref{oseen-bound}. To deal with
the second term $a_{2}$, we first make a fixed point on $a$ in the
space defined by \eqref{oseen-bound} so we have
\[
\left|\varphi a+\boldsymbol{v}\bnabla a\right|\lesssim\frac{\nu}{r^{3/2+\varepsilon}}\,.
\]
By using \lemref{oseen-bounds-W} and \lemref{oseen-bound-exp} we
obtain that the second term $a_{2}$ of \eqref{oseen-representation}
satisfies \eqref{oseen-bound}. Since $\nu$ is small, a fixed point
argument shows the bound \eqref{oseen-bound} claimed on $a$ and
therefore the bound on $\omega$.

In order to prove the asymptotic behavior of $a_{2}$, we show that
\[
I=\int_{\Omega}\biggl|\sqrt{8\pi}W-\frac{1}{r^{1/2}}\e^{-r_{0}\left(1-\cos\left(\theta-\theta_{0}\right)\right)}\biggr|\frac{1}{r_{0}^{3/2+\varepsilon}}\rd\bx_{0}\,,
\]
is bounded by $r^{-1/2-\varepsilon}$. By using the asymptotic expansion
of the Bessel function $K_{0}$, we have $I\leq I_{1}+I_{2}$ where
\begin{align*}
I_{1} & =\int_{\mathbb{R}^{2}}\biggl|\frac{1}{r_{1}^{1/2}}\e^{r-r_{0}-r_{1}}-\frac{1}{r^{1/2}}\e^{-r_{0}\left(1-\cos\left(\theta-\theta_{0}\right)\right)}\biggl|\frac{1}{r_{0}^{3/2+\varepsilon}}\rd\bx_{0}\,,\\
I_{2} & =\int_{\mathbb{R}^{2}}\frac{1}{r_{1}^{3/2}}\e^{r-r_{0}-r_{1}}\frac{1}{r_{0}^{3/2+\varepsilon}}\rd^{2}\bx_{0}\,,
\end{align*}
with $r_{1}=\left|\bx-\bx_{0}\right|$. In view of \lemref{oseen-bound-exp},
$I_{2}$ is bounded by $r^{-3/2}$. For $r_{0}\geq r/2$, the term
$I_{1}$ is bounded by $r^{-1/2+\varepsilon}$. For $r_{0}\leq r/2$,
we have
\[
\biggl|\frac{1}{r_{1}^{1/2}}-\frac{1}{r^{1/2}}\biggl|=\frac{1}{r_{1}^{1/2}r^{1/2}}\frac{r_{0}}{r^{1/2}+r_{1}^{1/2}}\frac{\left|r_{0}-2r\cos\left(\theta-\theta_{0}\right)\right|}{r+r_{1}}\lesssim\frac{r_{0}}{r_{1}^{1/2}r}\,,
\]
\begin{align*}
\left|\e^{r-r_{0}-r_{1}}-\e^{-r_{0}\left(1-\cos\left(\theta-\theta_{0}\right)\right)}\right| & \lesssim\e^{-r_{0}\left(1-\cos\left(\theta-\theta_{0}\right)\right)}\frac{r_{0}^{2}\sin^{2}\left(\theta-\theta_{0}\right)}{r_{1}+r-r_{0}\cos\left(\theta-\theta_{0}\right)}\\
 & \lesssim\e^{-r_{0}\left(1-\cos\left(\theta-\theta_{0}\right)\right)}\frac{r_{0}^{2}\left|\theta-\theta_{0}\right|^{2}}{r}\,.
\end{align*}
Therefore, always for $r_{0}\leq r/2$, we have like in the proof
of \lemref{oseen-bound-exp}, 
\begin{align*}
I_{1} & \lesssim\int_{\mathbb{R}^{2}}\biggl|\frac{1}{r_{1}^{1/2}}-\frac{1}{r^{1/2}}\biggl|\e^{-r_{0}\left(1-\cos\left(\theta-\theta_{0}\right)\right)}\frac{1}{1+r_{0}^{3/2+\varepsilon}}\rd^{2}\bx_{0}\\
 & \phantom{\lesssim}+\int_{\mathbb{R}^{2}}\frac{1}{r_{1}^{1/2}}\biggl|\e^{r-r_{0}-r_{1}}-\e^{-r_{0}\left(1-\cos\left(\theta-\theta_{0}\right)\right)}\biggl|\frac{1}{1+r_{0}^{3/2+\varepsilon}}\rd^{2}\bx_{0}\\
 & \lesssim\frac{1}{r}\int_{\mathbb{R}^{2}}\frac{1}{r_{1}^{1/2}}\e^{-r_{0}\left(1-\cos\left(\theta-\theta_{0}\right)\right)}\frac{1+r_{0}\left|\theta-\theta_{0}\right|^{2}}{1+r_{0}^{1/2+\varepsilon}}\rd^{2}\bx_{0}\lesssim\frac{1}{r^{1/2+\varepsilon}}\,.
\end{align*}
\end{proof}
\begin{lem}
\label{lem:oseen-bounds-W}The Green function \eqref{oseen-def-W}
satisfies
\begin{align*}
\left|W\right| & \lesssim\frac{1}{r_{1}^{1/2}}\e^{r-r_{1}-r_{0}}\,, & \left|\be_{r}\bcdot\bnabla W\right| & \lesssim\frac{1}{r_{1}^{3/2}}\e^{\left(r-r_{1}-r_{0}\right)/2}\,, & \left|\be_{\theta}\bcdot\bnabla W\right| & \lesssim\frac{r_{0}^{1/2}}{r_{1}^{3/2}}\e^{\left(r-r_{1}-r_{0}\right)/2}\,,
\end{align*}
where $r_{1}=\left|\bx-\bx_{0}\right|$.\end{lem}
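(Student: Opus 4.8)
The plan is to differentiate the closed form $W=\frac{1}{2\pi}\e^{r-r_{0}}K_{0}(r_{1})$ directly (here $\bnabla=\bnabla_{\bx}$) and then estimate the outcome using standard Bessel asymptotics together with two elementary trigonometric identities. Using $K_{0}'=-K_{1}$, $\bnabla_{\bx}r=\be_{r}$ and $\bnabla_{\bx}r_{1}=(\bx-\bx_{0})/r_{1}$, one finds
\[
\bnabla_{\bx}W=\frac{\e^{r-r_{0}}}{2\pi}\left(K_{0}(r_{1})\,\be_{r}-K_{1}(r_{1})\,\frac{\bx-\bx_{0}}{r_{1}}\right),
\]
so, writing $\theta_{1}$ for the argument of $\bx-\bx_{0}$,
\[
\be_{r}\bcdot\bnabla_{\bx}W=\frac{\e^{r-r_{0}}}{2\pi}\bigl(K_{0}(r_{1})-K_{1}(r_{1})\cos(\theta-\theta_{1})\bigr),\qquad\be_{\theta}\bcdot\bnabla_{\bx}W=\frac{\e^{r-r_{0}}}{2\pi}K_{1}(r_{1})\sin(\theta-\theta_{1}).
\]
The bound on $\left|W\right|$ is then immediate from the uniform estimate $K_{0}(t)\lesssim t^{-1/2}\e^{-t}$, valid for all $t>0$ since $\left|\log t\right|\lesssim t^{-1/2}$ near $t=0$. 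For the derivatives I would use in addition $K_{\nu}(t)\lesssim t^{-1/2}\e^{-t}$ for $t\geq1$, $K_{1}(t)\lesssim t^{-1}$ for $t\leq1$, and the cancellation $\left|K_{0}(t)-K_{1}(t)\right|\lesssim t^{-3/2}\e^{-t}$ for $t\geq1$, all following from $K_{\nu}(t)=\sqrt{\pi/(2t)}\,\e^{-t}\bigl(1+\tfrac{4\nu^{2}-1}{8t}+\bigO(t^{-2})\bigr)$.

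The geometric input is the law of cosines applied at the vertex $\bx$ of the triangle with vertices $\bzero$, $\bx_{0}$, $\bx$:
\[
1-\cos(\theta-\theta_{1})=\frac{(r_{0}+r_{1}-r)(r_{0}+r-r_{1})}{2rr_{1}}.
\]
Setting $s:=r_{0}+r_{1}-r\geq0$ — so that the target exponential factor is $\e^{-s/2}$ and $\e^{r-r_{0}}=\e^{r_{1}-s}$ — and bounding the second factor in the numerator by $2r$ and by $2r_{0}$ respectively, this gives
\[
r_{1}\bigl(1-\cos(\theta-\theta_{1})\bigr)\leq s,\qquad\sin^{2}(\theta-\theta_{1})\leq\frac{2r_{0}s}{rr_{1}},
\]
to be combined with the trivial $\left|\sin(\theta-\theta_{1})\right|\leq1$. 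I would also use $r_{0},r\geq\operatorname{dist}(\bzero,\partial B)>0$, valid for $\bx,\bx_{0}\in\Omega$.

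For $r_{1}\leq1$ everything is routine: $r-r_{0}\leq r_{1}\leq1$ forces $\e^{r-r_{0}}\lesssim1$ and $s\leq2r_{1}\leq2$ forces $\e^{-s/2}\gtrsim1$, while $K_{\nu}(r_{1})\lesssim r_{1}^{-1}\lesssim r_{1}^{-3/2}$ and $\left|\sin(\theta-\theta_{1})\right|\leq1\lesssim r_{0}^{1/2}$. For $r_{1}\geq1$ and the radial component, the split $K_{0}(r_{1})-K_{1}(r_{1})\cos(\theta-\theta_{1})=\bigl(K_{0}-K_{1}\bigr)(r_{1})+K_{1}(r_{1})\bigl(1-\cos(\theta-\theta_{1})\bigr)$ together with $r_{1}(1-\cos(\theta-\theta_{1}))\leq s$ yields $\left|\be_{r}\bcdot\bnabla_{\bx}W\right|\lesssim\e^{-s}r_{1}^{-3/2}(1+s)\lesssim r_{1}^{-3/2}\e^{-s/2}$. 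For the angular component, $\left|\be_{\theta}\bcdot\bnabla_{\bx}W\right|\lesssim\e^{-s}r_{1}^{-1/2}\min\bigl(1,\sqrt{2r_{0}s/(rr_{1})}\bigr)$, and it remains to check $\e^{-s/2}r_{1}r_{0}^{-1/2}\min\bigl(1,\sqrt{2r_{0}s/(rr_{1})}\bigr)\lesssim1$: if the minimum equals $1$ then $2r_{0}s\geq rr_{1}$ together with $r_{1}=s+r-r_{0}$ forces $r_{1}^{2}+r_{1}(r_{0}-s)-2r_{0}s\leq0$, hence $r_{1}\leq2s$ and $\e^{-s/2}r_{1}\lesssim1\lesssim r_{0}^{1/2}$; if the minimum equals the square root then the quantity is $\sqrt{2}\,\e^{-s/2}\sqrt{s}\,\sqrt{r_{1}/r}$, and from $r_{1}\leq s+r$ and $r\geq\operatorname{dist}(\bzero,\partial B)$ one has $\sqrt{r_{1}/r}\lesssim1+\sqrt{s}$, so this is $\lesssim\e^{-s/2}(\sqrt{s}+s)\lesssim1$.

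The step I expect to be the real obstacle is exactly the near-wake regime $r_{1}$ large, $s=r_{0}+r_{1}-r$ small: there $\theta-\theta_{1}$ is small, so the naive estimate $\left|K_{0}(r_{1})-K_{1}(r_{1})\cos(\theta-\theta_{1})\right|\leq K_{0}(r_{1})+K_{1}(r_{1})\lesssim r_{1}^{-1/2}\e^{-r_{1}}$ is one power of $r_{1}$ short of the claimed $r_{1}^{-3/2}$. Recovering it requires using the Bessel cancellation $\left|K_{0}-K_{1}\right|\lesssim t^{-3/2}\e^{-t}$ and the geometric inequality $r_{1}(1-\cos(\theta-\theta_{1}))\leq s$ simultaneously; the same mechanism for the angular component is what forces the case distinction above and, through $\sin^{2}(\theta-\theta_{1})\lesssim r_{0}s/(rr_{1})$, produces the factor $r_{0}^{1/2}$ in the statement. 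Everything else is bookkeeping with $s$ and the fact that $s^{k}\e^{-s/2}$ is bounded for each fixed $k$.
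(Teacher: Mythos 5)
Your proof is correct and follows essentially the same route as the paper: explicit differentiation of $W$, the Bessel cancellation $\left|K_{1}-K_{0}\right|\lesssim t^{-3/2}\e^{-t}$, and absorption of powers of $s=r_{0}+r_{1}-r$ into $\e^{-s/2}$. The only difference is cosmetic bookkeeping — you control the angular factors via the law-of-cosines identities for $\theta-\theta_{1}$, whereas the paper rewrites everything in terms of $\theta-\theta_{0}$ and the inequality $r_{0}+r_{1}-r\geq r_{0}\left(1-\cos(\theta-\theta_{0})\right)$; your version also treats the region $r_{1}\leq1$ explicitly, which the paper glosses over.
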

\begin{proof}
First, the Bessel functions satisfy
\begin{align*}
K_{0}(r) & \lesssim\frac{1}{r^{1/2}}\e^{-r}\,, & K_{1}(r) & \lesssim\frac{1}{r^{1/2}}\e^{-r}\,, & K_{1}(r)-K_{0}(r) & \lesssim\frac{1}{r^{3/2}}\e^{-r}\,,
\end{align*}
so the first bound is proven. Since $r-r_{0}\cos(\theta-\theta_{0})=r_{1}\cos(\theta_{1}-\theta)$,
we have,
\begin{eqnarray*}
\partial_{r}W & = & \frac{1}{2\pi r_{1}}\e^{r-r_{0}}\left[\left(r_{0}\cos(\theta-\theta_{0})-r\right)K_{1}(r_{1})+r_{1}K_{0}(r_{1})\right]\,,\\
 & = & \frac{1}{2\pi}\e^{r-r_{0}}\left[\frac{r_{1}-r+r_{0}\cos(\theta-\theta_{0})}{r_{1}}K_{0}(r_{1})-\cos(\theta-\theta_{1})\left(K_{1}(r_{1})-K_{0}(r_{1})\right)\right]
\end{eqnarray*}
so
\begin{align*}
\left|\partial_{r}W\right| & \lesssim\frac{1}{r_{1}^{3/2}}\e^{r-r_{1}-r_{0}}+\frac{1}{r_{1}}\e^{r-r_{0}}\left(\left|r_{0}+r_{1}-r\right|+r_{0}\left|1-\cos(\theta-\theta_{0})\right|\right)K_{0}(r_{1})\\
 & \lesssim\frac{1}{r_{1}^{3/2}}\e^{r-r_{1}-r_{0}}\left[1+\left|r_{0}+r_{1}-r\right|+r_{0}\left|1-\cos(\theta-\theta_{0})\right|\right]\lesssim\frac{1}{r_{1}^{3/2}}\e^{(r-r_{1}-r_{0})/2}\,.
\end{align*}
For the last bound, we have
\[
\frac{1}{r}\partial_{\theta}W=\frac{1}{2\pi r_{1}}\e^{r-r_{0}}r_{0}\sin(\theta-\theta_{0})K_{1}(r_{1})\,,
\]
so
\[
\left|\frac{1}{r}\partial_{\theta}W\right|\lesssim\frac{1}{r_{1}^{3/2}}\e^{r-r_{1}-r_{0}}r_{0}\left|\sin(\theta-\theta_{0})\right|\lesssim\frac{r_{0}^{1/2}}{r_{1}^{3/2}}\e^{(r-r_{1}-r_{0})/2}\,.
\]
\end{proof}
\begin{lem}
\label{lem:oseen-bound-exp}For $\alpha,\sigma\in\left(0,2\right)$
such that $\alpha+\sigma>3/2$, we have
\[
\int_{\mathbb{R}^{2}}\frac{1}{r_{1}^{\alpha}}\frac{1}{r_{0}^{\sigma}}\e^{r-r_{1}-r_{0}}\rd\bx_{0}\lesssim\frac{1}{r^{\alpha+\sigma-3/2}}+\frac{\left|\log r\right|^{\delta_{\sigma,3/2}}}{r^{\alpha}}+\frac{\left|\log r\right|^{\delta_{\alpha,3/2}}}{r^{\sigma}}\,,
\]
where $r=\left|\bx\right|$, $r_{0}=\left|\bx_{0}\right|$, $r_{1}=\left|\bx-\bx_{0}\right|$
and $\delta_{\alpha,\sigma}$ denotes the Kronecker delta.\end{lem}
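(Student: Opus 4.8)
The plan is to reduce the estimate to elementary one-dimensional integrals by splitting $\mathbb{R}^{2}$ according to the sizes of $r_{0}$ and $r_{1}$ relative to $r$. Since the integrand depends on $\bx$ only through $\lvert\bx\rvert=r$, I may assume $\bx=r\be_{1}$, so that $r_{1}^{2}=r^{2}-2rr_{0}\cos\theta_{0}+r_{0}^{2}$ and hence $(r_{1}-r+r_{0}\cos\theta_{0})(r_{1}+r-r_{0}\cos\theta_{0})=r_{0}^{2}\sin^{2}\theta_{0}\geq0$; as the second factor is positive, this gives the elementary inequality $r_{0}+r_{1}-r\geq r_{0}(1-\cos\theta_{0})$, which controls the wake-type regions. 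The weight $\e^{r-r_{1}-r_{0}}=\e^{-(r_{0}+r_{1}-r)}\leq1$ is the only source of decay and concentrates the mass near the segment joining $\bzero$ to $\bx$. Because $r_{0}+r_{1}\geq r$ always, the three sets $\Omega_{1}=\{r_{0}\leq r/2\}$, $\Omega_{2}=\{r_{1}\leq r/2\}$ and $\Omega_{3}=\{r_{0}\geq r/2,\ r_{1}\geq r/2\}$ cover $\mathbb{R}^{2}$, and I would estimate the integral over each.

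On $\Omega_{1}$ one has $r_{1}\geq r-r_{0}\geq r/2$, hence $r_{1}^{-\alpha}\lesssim r^{-\alpha}$, and by the inequality above, passing to polar coordinates,
\[
\int_{\Omega_{1}}\frac{1}{r_{1}^{\alpha}r_{0}^{\sigma}}\e^{r-r_{1}-r_{0}}\rd\bx_{0}\lesssim\frac{1}{r^{\alpha}}\int_{0}^{r/2}r_{0}^{1-\sigma}\left(\int_{-\pi}^{\pi}\e^{-r_{0}(1-\cos\theta_{0})}\rd\theta_{0}\right)\rd r_{0}\,.
\]
Since $1-\cos\theta_{0}\gtrsim\theta_{0}^{2}$ on $[-\pi,\pi]$, the inner integral is $\lesssim\min(1,r_{0}^{-1/2})$; the resulting radial integral converges near $r_{0}=0$ because $\sigma<2$, and on $[1,r/2]$ it is bounded by a constant, by $\log r$, or by a multiple of $r^{3/2-\sigma}$ according to whether $\sigma>3/2$, $\sigma=3/2$, or $\sigma<3/2$. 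Hence $\Omega_{1}$ contributes $\lesssim r^{-\alpha}\left|\log r\right|^{\delta_{\sigma,3/2}}+r^{3/2-\alpha-\sigma}$. The region $\Omega_{2}$ is treated by the change of variables $\bx_{0}\mapsto\bx-\bx_{0}$, which exchanges $r_{0}\leftrightarrow r_{1}$ and therefore yields $\lesssim r^{-\sigma}\left|\log r\right|^{\delta_{\alpha,3/2}}+r^{3/2-\alpha-\sigma}$.

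On $\Omega_{3}$ both $r_{0}\geq r/2$ and $r_{1}\geq r/2$, so $r_{0}^{-\sigma}r_{1}^{-\alpha}\lesssim r^{-\alpha-\sigma}$ and it remains to bound $\int_{\mathbb{R}^{2}}\e^{-(r_{0}+r_{1}-r)}\rd\bx_{0}$. I would compute this with the layer-cake formula: it equals $\int_{0}^{\infty}\e^{-s}\bigl|\{\bx_{0}:r_{0}+r_{1}<r+s\}\bigr|\rd s$, and for $s>0$ the set $\{r_{0}+r_{1}<r+s\}$ is exactly the interior of the ellipse with foci $\bzero$ and $\bx$ and major axis $r+s$, whose area is $\tfrac{\pi}{4}(r+s)\sqrt{2rs+s^{2}}$; using $\sqrt{2rs+s^{2}}\leq\sqrt{2rs}+s$ and integrating against $\e^{-s}$ gives $\lesssim r^{3/2}$ for $r\geq1$. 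Thus $\Omega_{3}$ contributes $\lesssim r^{3/2-\alpha-\sigma}$. (Alternatively, $\Omega_{3}$ can be handled by a direct parametrization near the segment $[\bzero,\bx]$, where the weight is essentially Gaussian in the transverse direction.)

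Adding the three contributions gives precisely $r^{3/2-\alpha-\sigma}+r^{-\alpha}\left|\log r\right|^{\delta_{\sigma,3/2}}+r^{-\sigma}\left|\log r\right|^{\delta_{\alpha,3/2}}$ for $r$ large, while for bounded $r$ the bound is immediate because the integral converges exactly when $\alpha,\sigma<2$. I do not expect a real obstacle here: the only points needing care are the borderline exponents $\sigma=3/2$ and $\alpha=3/2$, where the radial integral on $[1,r/2]$ diverges logarithmically and produces the Kronecker-delta factors, and the convergence near the singular points $\bx_{0}=\bzero$ and $\bx_{0}=\bx$, which is exactly where the hypotheses $\sigma<2$ and $\alpha<2$ are used. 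Everything else is bookkeeping of which region generates which term.
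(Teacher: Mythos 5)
Your proof is correct and, for the most part, follows the same route as the paper: the same splitting into the regions $r_{0}\leq r/2$, $r_{1}\leq r/2$ and $r_{0},r_{1}\geq r/2$, the same key inequality $r_{0}+r_{1}-r\geq r_{0}\left(1-\cos(\theta-\theta_{0})\right)$, the same angular bound $\int_{-\pi}^{\pi}\e^{-r_{0}(1-\cos\theta_{0})}\rd\theta_{0}\lesssim\min(1,r_{0}^{-1/2})$ producing the logarithm at the borderline exponent, and the same symmetry argument for the second region. The only place you genuinely diverge is the third region: the paper keeps the angular bound and the lower bound $r_{1}\gtrsim r+\left|r-r_{0}\right|$ to reduce to the radial integral $\int_{r/2}^{\infty}(r+\left|r-r_{0}\right|)^{-\alpha}r_{0}^{1/2-\sigma}\rd r_{0}$, whereas you bound both powers crudely by $r^{-\alpha-\sigma}$ and compute $\int_{\mathbb{R}^{2}}\e^{-(r_{0}+r_{1}-r)}\rd\bx_{0}\lesssim r^{3/2}$ exactly via the layer-cake formula and the area of the confocal ellipses; both give $r^{3/2-\alpha-\sigma}$, and your version has the minor advantage of being self-contained and geometrically transparent on that region.
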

\begin{proof}
We have to estimate
\[
I=\int_{0}^{\infty}\int_{-\pi}^{+\pi}\frac{1}{r_{1}^{\alpha}}\frac{1}{r_{0}^{\sigma}}\e^{r-r_{1}-r_{0}}r_{0}\rd\theta_{0}\rd r_{0}\,.
\]
First of all, since
\[
r_{1}^{2}=r^{2}+r_{0}^{2}-2rr_{0}\cos(\theta-\theta_{0})\geq r^{2}+r_{0}^{2}\cos^{2}(\theta-\theta_{0})-2rr_{0}\cos(\theta-\theta_{0})=\left(r-r_{0}\cos(\theta-\theta_{0})\right)^{2}\,,
\]
we have
\[
r_{1}+r_{0}-r\geq r_{0}\left(1-\cos(\theta-\theta_{0})\right)\,,
\]
and therefore
\[
\int_{-\pi}^{+\pi}\e^{r-r_{1}-r_{0}}\rd\theta_{0}\leq\int_{-\pi}^{+\pi}\e^{-r_{0}\left(1-\cos(\theta-\theta_{0})\right)}\rd\theta_{0}\leq\frac{1}{1+r_{0}^{1/2}}\,.
\]
If $r_{0}\leq r/2$, then $r_{1}\geq r/2$ and therefore
\begin{align*}
I & \lesssim\frac{1}{r^{\alpha}}\int_{0}^{r/2}\int_{-\pi}^{+\pi}\frac{1}{r_{0}^{\sigma}}\e^{-r_{0}\left(1-\cos(\theta-\theta_{0})\right)}r_{0}\rd\theta_{0}\rd r_{0}\\
 & \lesssim\frac{1}{r^{\alpha}}\int_{0}^{r/2}\frac{1}{r_{0}^{\sigma-1}}\frac{1}{1+r_{0}^{1/2}}\rd r_{0}\lesssim\frac{\left|\log r\right|^{\delta_{\sigma,3/2}}}{r^{\alpha}}+\frac{1}{r^{\alpha+\sigma-3/2}}\,.
\end{align*}
In the case where $r_{1}\leq r/2$, we have by symmetry the previous
bound with $\alpha$ and $\sigma$ exchanged,
\[
I\lesssim\frac{\left|\log r\right|^{\delta_{\alpha,3/2}}}{r^{\sigma}}+\frac{1}{r^{\alpha+\sigma-3/2}}\,.
\]
Therefore, it remains the case where $r_{0}\geq r/2$ and $r_{1}\geq r/2$,
for which we have
\[
I\lesssim\int_{r/2}^{\infty}\frac{1}{\left(r+\left|r-r_{0}\right|\right)^{\alpha}}\frac{1}{r_{0}^{\sigma-1/2}}\rd r_{0}\lesssim\frac{1}{r^{\alpha+\sigma-3/2}}\,.
\]

\end{proof}

\bibliographystyle{merlin-dot}
\phantomsection\addcontentsline{toc}{section}{\refname}\bibliography{paper}

\end{document}